\newcommand{\R}{\mathbb{R}}
\newcommand{\N}{\mathbb{N}}
\newcommand{\K}{\mathbb{K}}
\newcommand{\E}{\mathbb{E}}
\newcommand{\B}{\mathbb{B}}
\newcommand{\Z}{\mathbb{Z}}
\newcommand{\Q}{\mathbb{Q}}
\newcommand{\pp}{\mathbb{P}}
\newcommand\kB{\mathcal{B}}
\newcommand\kP{\mathcal{P}}
\newcommand\kF{\mathcal{F}}
\newtheorem {lem} {Lemma} [section]
\newtheorem {prop} {Proposition} [section]
\newtheorem {theo} {Theorem} [section]
\newtheorem {cor} {Corollary} [section]
\newcommand\la{\lambda}
\title{Poisson boundary of $GL_d(\Q)$}
\author{Sara Brofferio}
\author{Bruno Schapira}
\address{D\'epartement de Math\'ematiques, B\^at. 425, Universit\'e Paris-Sud 11, F-91405 Orsay, cedex, France. }
\email{sara.brofferio@math.u-psud.fr}
\email{bruno.schapira@math.u-psud.fr}
\begin{document}


\begin{abstract}  We construct the Poisson boundary for a random walk supported by the general
linear group on the rational numbers as the product of flag manifolds over the $p$-adic fields. To this purpose, we prove a law of large numbers using the Oseledets' multiplicative ergodic theorem. The only assumption we need is some moment condition on the measure governing the jumps of the random walk, but no irreducibility hypothesis is made. 
\end{abstract}

\keywords{Random walk on groups, Poisson boundary, linear group over number fields, Oseledets'
multiplicative ergodic theorem, law of large numbers}

\subjclass[2000]{22D40; 28D05; 28D20;
43A05; 60B15; 60J50}

\maketitle

\section{Introduction}

The Poisson boundary of a group endowed with some measure $\mu$, describes the asymptotic behavior of the random walk with step law $\mu$. In the same time it gives a representation of bounded harmonic functions (see for instance \cite{Fur} for a survey on this topic).  
There are now many results on Poisson boundary of groups of matrices (see for instance \cite{Az, F, GR, Kai00, KV, Led2, Rau} for some of the main results in this field, and \cite{BL, F2} for some surveys).

Here we consider more specifically groups of matrices with rational coefficients, which were already considered in our previous works \cite{Bro, Sch} for subgroups of triangular matrices. The novelty in the rational case, in comparison with standard results on real matrices, is that to describe the Poisson boundary, one has now to consider all possible embeddings of the rational field in the $p$-adic fields, and the Poisson boundary is then a product of "local parts", one for each prime number $p$ (see Theorem \ref{theo1} below for a precise statement of our result).
This phenomenon was already observed in \cite{Kai91} for the group of affine transformations with dyadic coefficients, and is very similar to some result proved in \cite{BS} in an adelic setting. It should be noticed also that we do not need any hypothesis on the support of the measure $\mu$.

Denote by $\kP^*$ the set of prime numbers and let $\kP= \kP^*\cup \{\infty\}$. For $p\in \kP^*$, denote by $\Q_p$ the field of $p$-adic numbers, and set by convention $\Q_\infty=\R$.

If $\mu$ is a probability measure on $GL_d(\Q_p)$ with finite logarithmic moment, i.e.
$$\int \left(\ln^+ ||g||_p+\ln ^+||g^{-1}||_p\right)\ d\mu(g)<+\infty,$$
the associated Lyapunov exponents are the real numbers $\la_1(p)\ge \dots\ge \la_d(p)$ such that
$$\sum_{i=1}^k \la_i(p) = \lim_{n\to +\infty}\frac{1}{n}\int \ln ||\bigwedge^k g||_p\ d\mu^{*n}(g),$$
where $\mu^{*n}$ denotes the $n$-fold convolution of $\mu$ and $\wedge$ the exterior product.
Denote by $P_p$ the parabolic subgroup of $GL_d(\Q_p)$ consisting of matrices $(p_{i,j})$ with $p_{i,j}=0$, when $\lambda_i(p)<\lambda_j(p)$, and let $B_p:=GL_d(\Q_p)/P_p$ be the associated flag manifold.

\noindent The main result of this paper is the following:

\begin{theo}
\label{theo1}
Let $\mu$ be a probability measure on $GL_d(\Q)$ such that
$$\sum_{p\in \kP} \int \left(\ln^+ ||g||_p+\ln ^+||g^{-1}||_p\right)\ d\mu(g)<+\infty.$$
Then there exists a unique probability measure $\nu$ on the space
$$\mathbb{B}:=\prod_{p\in \kP} B_p,$$
such that $(\mathbb{B},\nu)$ is the Poisson boundary of $(GL_d(\Q),\mu)$.
\end{theo}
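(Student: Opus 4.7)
The plan is to construct $\nu$ as the law of the almost sure limit of the random walk in $\mathbb{B}$, and then to identify $(\mathbb{B},\nu)$ with the full Poisson boundary through Kaimanovich's strip criterion applied prime by prime.

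\emph{Construction of $\nu$.} For each $p\in\kP$, the global moment assumption implies the local one, so the Oseledets multiplicative ergodic theorem applies to the i.i.d.\ product $S_n=X_1\cdots X_n$ inside $GL_d(\Q_p)$. Combined with a Furstenberg-type argument on the exterior powers (a law of large numbers for $\ln\|\bigwedge^k S_n v\|_p$ along $\mu^{*n}$-typical trajectories), this produces a deterministic Lyapunov filtration and the almost sure convergence of the flag $S_nP_p$ to a random element $Z_\infty^{(p)}\in B_p$. The summability of the moments across $\kP$, combined with the fact that any fixed $g\in GL_d(\Q)$ satisfies $\|g\|_p=1$ for all but finitely many $p$, lets me promote these local convergences to an almost sure convergence in the product, giving a single random variable $Z_\infty=(Z_\infty^{(p)})_{p\in\kP}\in\mathbb{B}$. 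I set $\nu$ to be the law of $Z_\infty$; stationarity is automatic from the i.i.d.\ structure, so $(\mathbb{B},\nu)$ is a $\mu$-boundary.

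\emph{Maximality and uniqueness.} To upgrade $(\mathbb{B},\nu)$ to the full Poisson boundary, I would run the reversed walk $\check S_n$ with step law $\check\mu$, obtain an analogous boundary point $\check Z_\infty$, and for $\nu\otimes\check\nu$-almost every pair $(\xi,\check\xi)$ build a strip $\mathcal{S}(\xi,\check\xi)\subset GL_d(\Q)$ consisting of elements that $\xi$ and $\check\xi$ simultaneously contract into a prescribed compact region, locally at each prime. Kaimanovich's criterion then demands that these strips have subexponential growth with respect to the gauge controlled by the moment condition. Using the Oseledets filtration as a substitute for proximality, one obtains the estimate at each prime, and the global strip is the appropriate product inside $GL_d(\Q)$; the summed moment hypothesis is precisely what allows the contributions of all primes to be combined into a single effective bound. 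Uniqueness of $\nu$ then reduces to uniqueness of the stationary measure supported on the maximal Lyapunov flag at each $p$, which is a direct consequence of Oseledets' theorem.

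\emph{Main obstacle.} The delicate point is that no irreducibility of $\mu$ is assumed, so the classical Furstenberg proximality/contraction dynamics on $B_p$ cannot be invoked to guarantee uniqueness of the stationary measure or to close the strip estimate by dynamical means. The whole scheme therefore rests on replacing proximality by the deterministic Lyapunov filtration furnished by Oseledets, and on controlling simultaneously the contributions of infinitely many primes via the global moment condition; turning this replacement into an effective subexponential strip estimate, uniformly over $\kP$, is the principal technical difficulty, especially at primes where several Lyapunov exponents coincide and the corresponding parabolic $P_p$ is strictly larger than a Borel subgroup.
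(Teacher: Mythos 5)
Your plan has the right skeleton — Oseledets for the boundary construction, a Kaimanovich entropy criterion for maximality, and the summed moment hypothesis for combining primes — but on the two technical pivots you take a genuinely different, and harder, route than the paper, and one of them contains a real gap.

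First, on the construction of the boundary map: you propose to extract $Z_\infty^{(p)}$ as the almost sure limit of the flags $x_nP_p$ in $B_p$. Without any irreducibility hypothesis this convergence is not clear (the Furstenberg contraction argument that makes $x_n\cdot o$ converge for any base point $o$ uses proximality). The paper instead defines $\mathbf{b}_p(w)\in B_p$ directly as the \emph{Oseledets filtration} of the backward products $x_n^{-1}$ — the nested subspaces $V^i(w)$ along which $\frac{1}{n}\ln|x_n^{-1}v|_p$ has a prescribed limit — and then checks the cocycle identity $w_1\mathbf{b}(\theta w)=\mathbf{b}(w)$, which already forces $(B_p,\nu_p)$ to be a $\mu$-boundary (Proposition~\ref{prop-mu-bound}). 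No limit of $x_nP_p$ is ever taken. (Also, the filtration is random, not deterministic; only the Lyapunov exponents are.)

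Second, and more importantly, on maximality you invoke the \emph{strip} criterion, whereas the paper uses the \emph{ray} criterion (Lemma~\ref{lem-ray}). Strips are indexed by a pair of boundary points $(\xi,\check\xi)$ coming from the forward and reversed walks, and their construction ordinarily relies on $\xi$ and $\check\xi$ being in general position (transversal flags) so that they determine a flat/apartment around which to thicken. Precisely because there is no irreducibility here, that transversality can fail, and you acknowledge yourself that patching this is the principal difficulty — but no mechanism is given. The paper avoids the issue entirely by proving a quantitative law of large numbers (Proposition~\ref{LLN}): with the explicit diagonal scaling matrices $\Lambda_n(p)$ built from the Lyapunov exponents, one has $\frac{1}{n}d_p(x_n,b\Lambda_n(p))\to 0$ for $b$ representing $\mathbf{b}_p(w)$. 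This single forward-looking estimate lets one take, for a finite set of primes $F$, the set $C_n^{F,\varepsilon}(\mathbf{b})$ of elements within $n\varepsilon$ of $b_p\Lambda_n(p)$ at each $p\in F$ and within $n(m_{F^c}+\varepsilon)$ of the identity outside $F$; Lemma~\ref{lem-card} gives exponential gauge growth, so these sets are subexponential up to an error controlled by $|F|\varepsilon+m_{F^c}$, which can be sent to zero. No reversed walk, no transversality, no proximality. So the ray criterion plus the $\Lambda_n$-rays are the key ideas your proposal is missing, and without them the strip construction you outline does not obviously close.

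Finally, a small remark: once $(\B,\nu)$ is shown to be the Poisson boundary, uniqueness of $\nu$ is automatic from the general theory of stationary measures on a $\mu$-boundary; it does not need a separate appeal to Oseledets.
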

This theorem unifies and generalizes several results on Poisson boundary of rational matrices groups, known up to now. In particular, it has been  proved separately by F. Ledrappier \cite{Led2} and V. Kaimanovich \cite{Kai85} that the Poisson boundary of a random walk supported by $SL_d(\Z)$  is the real flag manifold $B_\infty$. This results is contained in Theorem \ref{theo1} because, in this case, for all $p\neq\infty$, the associated Lyapunov exponents are all equal to zero, thus $B_p$ is trivial. Furthermore since Theorem \ref{theo1} does not require any irreducibility condition, it also applies to the case of rational affine group and to rational triangular matrices previously threaded by the authors \cite{Bro, Sch}.

We would like to remark that for general number fields (i.e. finite extensions of $\Q$) a similar result can be proved by adapting our methods (see in \cite{Sch} hints to possible generalization).

Due to its generality, our result does not say much about $\nu$ and its support. In particular it is not true that the restriction of $\nu$ to each $B_p$ has always full support. For instance if $\mu$ is supported on the subgroup of upper triangular matrices, we know \cite{Bro, Sch} that $\nu$ charges only one Bruhat cell of each $B_p$. But even this is not optimal since $\mu$ could be supported on diagonal matrices and with all Lyapunov exponents distinct, but in this case the Poisson boundary would be trivial (one point).  However irreducibility hypothesis  can give information on the support of $\mu$. We have for instance the following triviality criterion:
\begin{cor}\label{cor-triv} Let $p\in \kP$.
If $\lambda_1(p)=\lambda_d(p)$, then the projection of $\nu$ on $B_p$  is trivial.

Conversely, if the projection of $\nu$ on $B_p$  is trivial and no proper subspace of $\Q_p^d$ is fixed by the support of $\mu$, then $\lambda_1(p)=\lambda_d(p)$.
\end{cor}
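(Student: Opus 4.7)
The forward direction is immediate from the definitions: if $\lambda_1(p)=\lambda_d(p)$ then all the exponents $\lambda_1(p),\dots,\lambda_d(p)$ coincide, so the relation $\lambda_i(p)<\lambda_j(p)$ never holds, $P_p=GL_d(\Q_p)$ and $B_p$ is reduced to a single point. The projection of $\nu$ on $B_p$ is then a Dirac mass, hence trivial.

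For the converse, my plan is to exploit the stationarity of $\nu$. By Theorem \ref{theo1}, $\nu$ is $\mu$-stationary on $\B$; hence its pushforward $\nu_p$ on the factor $B_p$ is $\mu_p$-stationary, where $\mu_p$ denotes the image of $\mu$ in $GL_d(\Q_p)$ under the natural embedding $\Q\hookrightarrow\Q_p$. Interpreting ``the projection is trivial'' as $\nu_p=\delta_F$ for some flag $F\in B_p$, the stationarity identity $\int \delta_{gF}\,d\mu_p(g)=\delta_F$ forces $gF=F$ for $\mu_p$-almost every $g$, and then for every $g$ in the support of $\mu$ by continuity of the orbit map $g\mapsto gF$ (the set $\{g:gF=F\}$ is closed and of full $\mu_p$-measure, so it contains the support). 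Assuming for contradiction that $\lambda_1(p)>\lambda_d(p)$, the flag manifold $B_p$ is not a point and $F$ corresponds to at least one proper non-zero subspace $V\subsetneq \Q_p^d$; by what precedes $V$ would be preserved by every element of the support of $\mu$, contradicting the irreducibility hypothesis. Hence $\lambda_1(p)=\lambda_d(p)$.

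The only subtle point I expect to have to address carefully is the precise meaning of ``the projection of $\nu$ on $B_p$ is trivial''. If this is read literally as $\nu_p$ being a Dirac mass, the argument above applies directly. If the intended reading is instead that $B_p$ contributes no non-trivial measurable sub-$\sigma$-algebra to the Poisson boundary, one first has to upgrade this to $\nu_p=\delta_F$; this should follow from standard facts about $\mu$-stationary measures on compact homogeneous $G$-spaces (any non-atomic $\nu_p$ would produce non-constant bounded $\mu$-harmonic functions via Poisson integration, contradicting triviality), but deserves to be spelled out. Once this reduction is done the remainder of the argument is purely algebraic and as above.
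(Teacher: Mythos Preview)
Your proof is correct and follows essentially the same route as the paper's: the forward direction is the trivial observation that $P_p=GL_d(\Q_p)$ when all exponents coincide, and for the converse you argue by contradiction that a Dirac stationary measure on a nontrivial $B_p$ must be supported on a fixed flag of the support of $\mu$, yielding an invariant proper subspace. The paper's argument is the same, only terser (it simply asserts the fixed-point property without spelling out the stationarity/continuity step you include); your discussion of the alternative reading of ``trivial'' is extra commentary not present in the paper, which takes the Dirac-mass interpretation directly.
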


There exists several results in the literature to decide whether the real Lyapunov exponents are all equal $\lambda_1(\infty)=\lambda_d(\infty)$. For instance, under irreducibility hypothesis this is  equivalent to ask that the closed subgroup generated by $\mu$ in $GL_d(\R)$ is amenable \cite{Gui80}. For other references and results on product of real random matrices, see  also \cite{BL}. It seem very likely that similar results hold on $p$-adic setting.

A different question that is still open is to understand the behavior of the measure $\nu$ on the product of the $p$-flag manifolds, and not only of its projection on each $B_p$. For instance: does $\nu$ charge  the whole product or  is it supported by some  "diagonal" sub-set? Is there some sort of correlation among the different $p$-adic components?

 The main tool of the proof of  Theorem \ref{theo1} is to produce, using the multiplicative ergodic theorem of Oseledets, a law of large numbers for random walks on $GL_d(\Q_p)$ (not necessarily with rational coefficients, see Proposition \ref{LLN}). Notice that such result on $GL_d(\R)$ or on the affine group over $\Q_p$ was already known (see \cite{Kai87} and \cite{CKW} respectively). The Lyapunov exponents give the speeds of convergence and the boundary limit of the random walk on $B_p$ the directions. This is done in Section 3, where we also use this result to prove that $B_p$ and $\B$ are $\mu$-boundaries.

In Section 4, we use entropy criterion due to Kaimanovich to establish the maximality of $(\B,\nu)$ and prove the main theorem and its corollary.

We notice that our strategy is very similar in spirit to that used by Karlsson and Margulis in \cite{KM} in a slightly different setting. But here the proof is more direct, since we can use Oseledets theorem, and we do not need to identify the Poisson boundary with the geometric boundary of some non-positively curved metric space.

The authors would like to thank Uri Bader for suggesting them  the problem. They are also grateful to Fran{\c c}ois Ledrappier and Anders Karlsson for useful advices and references.

\section{Preliminaries}
\subsection{General linear group over $\Q_p$} If $\K$ is a field, we denote by $GL_d(\K)$ the group of invertible matrices of size $d$ with coefficients in $\K$. We denote by $e$ the identity matrix.

For $p\in \kP$ and $v=(v_1,\dots,v_d) \in \Q_p^d$, we set
$$|v|_p=\max_i |v_i|_p,\mbox{ if } p\neq \infty \quad\mbox{ and }\quad |v|_\infty=\sqrt{\sum_i |v_i|_\infty^2},$$
and if $g\in GL_d(\Q_p)$ we set
$$||g||_p=\sup_{|v|_p=1} |gv|_p.$$

For any $p\in\kP$ and $g,h\in GL_d(\Q_p)$ set
$$d_p(g,h)= \ln^+ ||g^{-1}h||_p + \ln^+||h^{-1}g||_p,$$
where $\ln^+$ denotes the positive part of the function $\ln $. It is easily  checked that $d_p$ is symmetric and satisfies the triangular inequality. It is not a distance since the set of $g\in GL_d(\Q_p)$ such that $d_p(e,g)=0$ is the compact subgroup of linear isometries of $\Q_p^d$.
Furthermore $d_p$ is left-invariant:
$$d_p(\gamma g, \gamma h) = d_p(g,h), $$
for all $g,h,\gamma \in GL_d(\Q_p)$.

For all $g,h\in GL_d(\Q)$, let
$$d(g,h)=\sum_{p\in\kP} d_p(g,h).$$
This define a left-invariant pseudometric on $GL_d(\Q)$.

\subsection{The flag manifold}
\label{jis}

For each $p\in \kP$ fix the sequence of Lyapunov exponents $\la_1(p)\ge \dots\ge \la_d(p)$. The associated parabolic sub-group is
$$P_p=\left\{(p_{i,j})\in GL_d(\Q_p) \mid p_{i,j}=0\mbox{ if }\lambda_i(p)<\lambda_j(p)\right\}.$$
The flag manifold  $B_p:=GL_d(\Q_p)/P_p$  is then  a compact separable $GL_d(\Q_p)$-space.

We mention that there is a one to one map between $B_p$ and the space of flags, viewed as the set of imbedded sequences of sub-spaces of $\Q_p^d$ of fixed dimensions. In fact
$$B_p=\left\{(V_1,\ldots,V_r)\mid V_1\leq \cdots \leq V_r=Q_p^d, \quad \dim (V_i)=j_i\quad \forall i\le r\right\},$$
where $r$ is the number of distinct values taken by $\la_1(p),\dots,\la_d(p)$, and $j_1,\dots,j_r$ are defined inductively by $j_r=d$ and $j_{i-1}=\max\{j<j_i\mid \la_j(p)>\la_{j_i}(p)\}$, for $2\le i\le r$. To see the correspondence between $B_p$ and this space of flags, observe that $GL_d(\Q_p)$ acts transitively on the flags
and that the parabolic subgroup $P_p$ is the stabilizer of the element $(E_1,\dots,E_r)$, where for all $i$,
$E_i$ is the vector space generated by the first $j_i$ vectors of the canonical basis.

Let $$\mathbb{B}:=\prod_{p\in \kP} B_p,$$ equipped with the product topology. With the natural diagonal action, $\B$ is a compact separable $GL_d(\Q)$-space.

\subsection{Random walk and $\mu$-boundaries} Let $\mu$ be a probability measure on a locally compact separable
group  $G$.
Let
$$(\Omega,\pp):=(G,\mu)^{\otimes \N},$$
be the product of $\N$ independent copies of $(G,\mu)$ (here $\N$ is the set of strictly positive integers).
If $w=(w_i,i\ge 1) \in \Omega$, the \emph{random walk} is the process defined by
$$x_n:=w_1\dots w_n \quad \forall n\ge 1\quad\mbox{and}\quad x_0:=e.$$
Observe that under $\pp$, for any fixed $n$, the law of  $x_n$ is $\mu^{*n}$, the $n$-th convolution power of $\mu$.

Assume that $B$ is a compact separable space, endowed with a probability
measure $\nu$ and a continuous action of $G$. We say that $\nu$ is
\emph{$\mu$-stationary} (also known as $\mu$-invariant or $\mu$-harmonic), if
$$\mu *\nu:=\int_{G} (g\nu)\ d\mu(g)=\nu,$$ where for
all $g\in G$, $g\nu$ is defined by
$$g\nu(f)=\int_{B}f(gz)\ d\nu(z),$$ for all continuous
functions $f$. In this case, according to Furstenberg \cite{F,F2}, we
say that $(B,\nu)$ is a \emph{$\mu$-boundary} if, $\pp$-almost
surely $x_n\nu$ converges  weakly to a Dirac measure.

A $\mu$-boundary $(B,\nu)$ is naturally associated to  a measurable function  $\mathbf{b}=\mathbf{b}_B: \Omega\to B$ defined by
\begin{equation}\label{eq-mu-bound}
\lim_{n\to +\infty} x_n \nu = \delta_{\mathbf{b}(w)}.
\end{equation}
Then $\nu$ is the image of $\pp$ under $\mathbf{b}$.

Denote by $\theta$ the shift transformation on $\Omega$: if $w=(w_i,i\ge 1) \in \Omega$, then
$$(\theta w)_i=w_{i+1} \quad i\ge 1.$$
The measure $\pp$ is $\theta$-invariant and it is easy to see that the function defined in \eqref{eq-mu-bound} satisfies
 $$w_1\mathbf{b}(\theta w) = \mathbf{b}(w).$$
This property characterizes functions that arise from $\mu$-boundaries, as follows from this known result (see for instance \cite{Kai00,Led2}):
\begin{prop}\label{prop-mu-bound} Let $B$ be a compact separable $G$-space and let $\textbf{b}:\Omega \to B$ be a measurable map, such that $\pp$-a.s. we have $w_1\mathbf{b}(\theta w) = \mathbf{b}(w)$. Let $\nu$ be the law of $\mathbf{b}$. Then $(B,\nu)$ is a $\mu$-boundary.
\end{prop}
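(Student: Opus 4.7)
The plan is to verify the two requirements in the definition of a $\mu$-boundary: first that $\nu$ is $\mu$-stationary, and second that $x_n\nu$ converges $\pp$-almost surely to a Dirac mass.

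The starting observation is that the equivariance $w_1 \mathbf{b}(\theta w) = \mathbf{b}(w)$ iterates: for every $n\ge 1$,
\begin{equation*}
\mathbf{b}(w) = w_1 w_2 \cdots w_n\, \mathbf{b}(\theta^n w) = x_n\, \mathbf{b}(\theta^n w)\quad \pp\text{-a.s.}
\end{equation*}
Taking $n=1$ and using that $w_1$ has law $\mu$, is independent of $\theta w$, and $\theta$ preserves $\pp$ (so $\mathbf{b}(\theta w)$ has law $\nu$), the distribution of $w_1\mathbf{b}(\theta w)$ is $\mu*\nu$. Since this coincides with the law of $\mathbf{b}(w)$, which is $\nu$, we obtain $\mu * \nu = \nu$, i.e.\ $\nu$ is $\mu$-stationary.

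For the convergence to a Dirac mass, fix a continuous function $f:B\to\R$ and denote $\kF_n=\sigma(w_1,\dots,w_n)$. By the iterated equivariance, $f(\mathbf{b}(w))=f(x_n\, \mathbf{b}(\theta^n w))$, and since $x_n$ is $\kF_n$-measurable while $\mathbf{b}(\theta^n w)$ is independent of $\kF_n$ with law $\nu$, we get
\begin{equation*}
\E\!\left[f(\mathbf{b})\mid \kF_n\right] = \int_B f(x_n z)\, d\nu(z) = (x_n\nu)(f).
\end{equation*}
The left-hand side is a uniformly bounded martingale, which by Doob's convergence theorem converges $\pp$-a.s.\ to $\E[f(\mathbf{b})\mid \kF_\infty]=f(\mathbf{b}(w))$, since $\mathbf{b}$ is itself $\kF_\infty$-measurable. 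Hence $(x_n\nu)(f)\to f(\mathbf{b}(w))$ almost surely.

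The only remaining step is to upgrade this from one function to weak convergence. Since $B$ is compact separable, $C(B)$ admits a countable dense subset $\{f_k\}_{k\ge 1}$ for the uniform norm. Applying the previous point simultaneously to every $f_k$ on a single full-measure set, and then extending to all of $C(B)$ by a standard $3\varepsilon$ argument using uniform density and the uniform bound $\|x_n\nu\|\le 1$, we deduce that for $\pp$-a.e.\ $w$, $x_n\nu$ converges weakly to $\delta_{\mathbf{b}(w)}$. This is exactly the defining property of a $\mu$-boundary, which completes the proof. The only mildly delicate point, and really the conceptual core, is recognizing the identity $(x_n\nu)(f) = \E[f(\mathbf{b})\mid \kF_n]$; everything else is a routine application of martingale convergence and separability.
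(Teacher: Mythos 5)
Your proof is correct, and while it rests on the same ingredients as the paper's (stationarity, the martingale $(x_n\nu)(f)$, separability of $B$), it takes a cleaner route at the crucial step. The paper first invokes bounded-martingale convergence to get a random limit measure $\nu_\infty^w$, and then separately proves $\nu_\infty^w=\delta_{\mathbf{b}(w)}$ by showing that the two measures $\pp\,\nu_\infty^w$ and $\pp\,\delta_{\mathbf{b}(w)}$ on $\Omega\times B$ agree on each $\sigma(w_1,\dots,w_k)\vee\kB$ and hence coincide. You instead observe, up front, the identity
$$(x_n\nu)(f)=\E\bigl[f(\mathbf{b})\mid\kF_n\bigr],$$
which comes straight from the iterated equivariance $\mathbf{b}(w)=x_n\mathbf{b}(\theta^n w)$, the $\kF_n$-measurability of $x_n$, and the independence of $\mathbf{b}(\theta^n w)$ from $\kF_n$. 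Once that identity is in hand, Lévy's upward martingale convergence theorem gives the limit $f(\mathbf{b}(w))$ directly, since $\mathbf{b}$ is $\kF_\infty$-measurable; no separate identification of the limit measure is needed. Your approach thus collapses the paper's two-step argument into one, and makes explicit the conditional-expectation structure that the paper only uses implicitly in its measure comparison. Both proofs require separability of $B$ to pass from pointwise convergence of $(x_n\nu)(f_k)$ on a countable dense family to weak convergence, and your $2\varepsilon$-argument there is standard and correct.
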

\begin{proof} We give here a proof for sake of completeness. By using the invariance of $\pp$ by $\theta$ and the hypothesis on the map $b$ we get for every continuous functions $f$ on $B$
\begin{eqnarray*}
\nu (f) &=& \int_\Omega f(\mathbf{b}(w))\ d\pp(w)\\
            &=& \int_\Omega f(w_1 \mathbf{b}(\theta w))\ d\pp(w)\\
            &=& \int_G\left(\int_\Omega f(w_1 \mathbf{b}(w'))\ d\pp(w')\right)d\mu(w_1)=\mu*\nu(f),
\end{eqnarray*}
proving that $\nu$ is $\mu$-stationary.
The hypothesis on $\mathbf{b}$ also shows that for any continuous function $f$, the sequence
$$M_n(w):= x_n\cdot \nu (f) \quad n\ge 1,$$
is a bounded martingale. Thus this sequence converges a.s and in $L^1$ toward some limit, say $\nu_\infty^w(f)$. Since $B$ is separable, this defines actually a random measure $\nu^w_\infty$ on $B$, which is the weak limit of $x_n\cdot\nu$, $n\ge 1$. Observe now that for all $k\ge 1$,
$$\nu_\infty^w= x_k\nu_\infty^{\theta^kw} \quad \pp-a.s.$$
Moreover $\E[\nu_\infty^w]=\nu$. Observe also that the Dirac measure $\delta_{\mathbf{b}(w)}$ has the same properties. As a consequence for any $k\ge 1$, any Borel subsets $O_1,\dots,O_k \subset G$ and $U\subset B$,
\begin{eqnarray*}
\pp\nu_\infty^w[O_1\times \dots \times O_k \times U] & = & \int_\Omega \nu_\infty^w(U) 1(w_1\in O_1,\dots,w_k\in O_k)\ d\pp(w)\\
 & = &  \int_\Omega w_1\dots w_k \nu_\infty^{\theta^kw}( U) 1(w_1\in O_1,\dots,w_k\in O_k)\ d\pp(w)\\
  & = & \int_\Omega \nu((w_1\dots w_k )^{-1} U) 1(w_1\in O_1,\dots,w_k\in O_k)\ d\pp(w).
\end{eqnarray*}
For the same reason
$$\pp\delta_{\mathbf{b}(w)}[O_1\times \dots \times O_k \times U]=\int_\Omega \nu((w_1\dots w_k )^{-1} U) 1(w_1\in O_1,\dots,w_k\in O_k)\ d\pp(w).$$
Thus the two measures $\pp \nu_\infty^w$ and $\pp\delta_{\mathbf{b}(w)}$ defined on $\Omega \times B$ coincide on $\kF_k:=\sigma(w_1,\dots,w_k)\vee \kB$, for all $k\ge 1$, where $\kB$ denotes the Borel sets of $B$.
Since the filtration $(\kF_k)_{k\ge 0}$ generates the $\sigma$-algebra of $\Omega\times B$ on which are defined these measures, they are equal, proving that $\nu_\infty^w$ is well a Dirac measure. This concludes the proof of the proposition.
\end{proof}

\subsection{Poisson boundary and asymptotic entropy}
The Poisson boundary $(\B,\nu)$ is defined as the maximal $\mu$-boundary,
i.e. it is the $\mu$-boundary such that any other $\mu$-boundary is one of its measurable $G$-equinvariant quotients. A classical problem is to decide weather a space, that is known to be a $\mu$-boundary, is in fact the maximal one.

For countable groups, there exists powerful techniques based on the estimation of the entropy introduced
by Kaimanovich and Vershik \cite{KV} and Derrienic \cite{Der} and further developed by  Kaimanovich (see \cite{Kai00} for details). Suppose that the measure $\mu$ has finite entropy:
$$H(\mu):=-\sum_{g\in G} \mu(g)\ln \mu(g)<\infty.$$
If $(B,\nu)$ is a $\mu$-boundary and $z\in B$, it is possible to define  the law $\pp^z$ of $w\in \Omega$ conditioned by $\mathbf{b}(w)=z$. Then for $n\ge 0$, $\pp_n^z$ denotes the law of $x_n$ under  $\pp^z$, i.e.
$$\pp_n^z(g)=\pp^z(x_n=g)=\pp(x_n=g \mid \mathbf{b}(w)=z).$$
The conditional asymptotic entropy $h^z$ is defined by
$$h^z:=-\lim_{n\to +\infty} \frac{\ln \pp_n^z(x_n)}{n}\quad \pp^z-a.s.$$
Then $(B,\nu)$ is the Poisson boundary if, and only if,  $h^z$ is equal to zero for $\nu$-almost every $z$.

\section{Law of large numbers and $\mu$-boundaries for $GL_d(\Q_p)$}

In this section we can assume $\mu$ to be a probability measure on $GL_d(\Q_p)$, not necessarily supported on matrices with rational coefficients. We are going to show that, under first moment hypothesis, the random walk on $GL_d(\Q_p)$ satisfies a strong law of large numbers, in which the "speeds" of the drift are given by the Lyapunov exponents and   the "directions" are given by an element of the associated flag manifold $B_p$. This approach was introduced by V. Kaimanovich in \cite{Kai87} for semisimple Lie groups, as a group-geometrical version of the classical multiplicative ergodic theorem of Oseledets (see also \cite{KM} and \cite{KL}).

Related to this result, we will see that $B_p$, endowed with the law of the "direction", is a $\mu$-boundary for the random walk.

\subsection{Oseledets' theorem and law of large numbers}
If $p=\infty$, let $\Lambda_n =\Lambda_n(\infty)$ be the diagonal matrix of $GL_d(\R)$ with coefficients
\begin{equation}\label{eq-lambda-R}
  (\Lambda_n)_{i,i}:=e^{n \lambda_i(\infty) } \quad \forall i\le d.
\end{equation}
If $p\in \kP^*$, let $\Lambda_n=\Lambda_n(p)$ be the diagonal matrix with coefficients
\begin{equation}\label{eq-lambda-Qp}(\Lambda_n)_{i,i}:=p^{-\left[\frac{n\lambda_i(p)}{\ln p}\right]} \quad \forall i\le d;\end{equation}
where $[\cdot]$ is the integer part.
In such a way, $\Lambda_n$  has rational entries whose $p$-norms are close to the $e^{n \lambda_i (p)}$'s.
\begin{prop}
\label{LLN} Assume that $\int d_p(e,g)\, d\mu(g)<+\infty$. Then there exists a measurable map $$\mathbf{b}=\mathbf{b}_p:\Omega\to B_p,$$ such that $\pp$-almost surely  $\mathbf{b}(w)$ is the unique element of  $B_p$ such that
\begin{equation}\label{eq-LLN}
\lim_{n\to +\infty} \frac{1}{n} d_p\left(x_n,b\Lambda_n\right)=0,
\end{equation}
for any $b$ in the class of $\mathbf{b}(w)$.
\end{prop}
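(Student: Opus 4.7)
The approach is to apply Oseledets' multiplicative ergodic theorem, in the form valid over local fields such as $\Q_p$, to the product cocycle generated by the shift $\theta$ and the matrix-valued function $w\mapsto w_1$. The first moment assumption $\int d_p(e,g)\, d\mu(g)<\infty$ suffices for the integrability conditions, and Kingman's subadditive ergodic theorem recovers the Lyapunov exponents $\lambda_1(p)\ge\cdots\ge\lambda_d(p)$. Oseledets' theorem then provides, $\pp$-almost surely, a Cartan-type decomposition $x_n=k_na_nk_n'$, where $k_n,k_n'$ lie in the maximal compact subgroup of linear isometries of $\Q_p^d$ and $a_n$ is diagonal with $\frac{1}{n}\ln|(a_n)_{ii}|_p\to\lambda_i(p)$.

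I would then define $\mathbf{b}(w)\in B_p$ as the limit of $k_nP_p$, i.e.\ as the limit of the sub-flag $(k_nE_1,\ldots,k_nE_r)$, where $(E_1,\ldots,E_r)$ is the reference flag from Section \ref{jis}. The crucial content of Oseledets' theorem is precisely this flag-level convergence: within a block of equal Lyapunov exponents the individual columns of $k_n$ need not converge, but the subspace spanned by the first $j_i$ of them does, which is exactly why $B_p$ is defined in terms of the Lyapunov jumps. Measurability of $\mathbf{b}$ follows from the measurability of the Cartan components.

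To verify the rate, pick any representative $b\in GL_d(\Q_p)$ of $\mathbf{b}(w)$ and use left-invariance of $d_p$ to rewrite
$$d_p(x_n,b\Lambda_n)=\ln^+\|\Lambda_n^{-1}b^{-1}x_n\|_p+\ln^+\|x_n^{-1}b\Lambda_n\|_p.$$
Substituting $x_n=k_na_nk_n'$ and using that $b^{-1}k_n$ converges to an element of $P_p$ (up to the compact stabilizer, which does not affect $d_p$), both terms reduce to estimates of the form $\ln^+\|\Lambda_n^{-1}q_na_n\|_p$ with $q_n\to q\in P_p$. Conjugation by $\Lambda_n^{-1}$ multiplies the $(i,j)$-entry of $q$ by a factor with log-rate $\lambda_j(p)-\lambda_i(p)\le 0$ (entries outside $P_p$ are already zero), so the resulting matrix is subexponentially bounded; combined with $\frac{1}{n}\ln|(a_n)_{ii}|_p\to\lambda_i(p)$, both $\ln^+$ terms are $o(n)$. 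Uniqueness of $\mathbf{b}(w)$ follows by the same computation: if $b'$ satisfies the same asymptotic relation, then $\frac{1}{n}d_p(e,\Lambda_n^{-1}c\Lambda_n)\to 0$ for $c=b^{-1}b'$, which forces the entries $c_{ij}$ with $\lambda_i(p)<\lambda_j(p)$ to vanish, hence $c\in P_p$ and $bP_p=b'P_p$.

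The main obstacle is deploying Oseledets' theorem correctly in the non-Archimedean setting so as to extract both the Lyapunov decomposition and the flag-level convergence of the $k_n$ simultaneously; once this is in place, the remaining estimates amount to a careful but essentially routine bookkeeping on the level of $P_p$ and parabolic conjugation by $\Lambda_n$.
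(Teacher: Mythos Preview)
Your uniqueness argument is correct and matches the paper's. The strategy via the Cartan decomposition $x_n=k_na_nk_n'$ is reasonable, but the central estimate has a genuine gap.

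You write that ``$b^{-1}k_n$ converges to an element of $P_p$'' and then reduce both halves of $d_p$ to $\ln^+\|\Lambda_n^{-1}q_na_n\|_p$ with $q_n\to q\in P_p$. But flag convergence $k_nP_p\to bP_p$ in $B_p$ does \emph{not} say that $b^{-1}k_n$ converges in $GL_d(\Q_p)$; only its $P_p$-coset does. Decomposing $b^{-1}k_n=u_np_n$ with $p_n\in P_p$ and $u_n$ in an opposite unipotent transversal, one gets $u_n\to e$ and $p_n$ bounded, but $\|\Lambda_n^{-1}b^{-1}x_n\|_p$ then contains the factor $\Lambda_n^{-1}u_n\Lambda_n$, whose $(i,j)$-entry is $(u_n)_{ij}\,|(\Lambda_n)_{ii}|_p^{-1}|(\Lambda_n)_{jj}|_p$. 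For the entries in $N^-$ one has $\lambda_j(p)>\lambda_i(p)$, so this blows up exponentially unless $(u_n)_{ij}$ decays at the matching rate $e^{-n(\lambda_j(p)-\lambda_i(p))+o(n)}$. Topological convergence $u_n\to e$ does not give this; you would need to extract it from Oseledets as a separate quantitative statement, and that is exactly the work you have not done. Your phrase ``subexponentially bounded'' presupposes the conclusion.

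The paper takes a different and cleaner route that never touches the Cartan decomposition or any rate of flag convergence. It first records (Lemma~\ref{oselbis}) that the bare filtration form of Oseledets for $x_n^{-1}$ already yields the one-sided estimate $\tfrac{1}{n}\ln\|x_n^{-1}b\Lambda_n\|_p\to 0$, simply by reading off the columns of $b\Lambda_n$. For the reverse estimate it applies the \emph{same} lemma to the walk $\widetilde x_n=(x_n^t)^{-1}$, driven by the pushforward $\widetilde\mu$ of $\mu$ under $g\mapsto(g^t)^{-1}$; since the Lyapunov spectrum of $\widetilde\mu$ is $-\lambda_{d-i}(p)$, a transpose and a conjugation by the order-reversing permutation convert the resulting bound into $\tfrac{1}{n}\ln\|\Lambda_n^{-1}\bar b^{-1}x_n\|_p\to 0$ for some a priori different $\bar b$. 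Finally $\bar b^{-1}b\in P_p$ drops out of submultiplicativity,
\[
\tfrac{1}{n}\ln\|\Lambda_n^{-1}\bar b^{-1}b\Lambda_n\|_p\le \tfrac{1}{n}\ln\|\Lambda_n^{-1}\bar b^{-1}x_n\|_p+\tfrac{1}{n}\ln\|x_n^{-1}b\Lambda_n\|_p\to 0,
\]
together with the characterization $u\in P_p\iff \limsup_n\tfrac{1}{n}\ln\|\Lambda_n^{-1}u\Lambda_n\|_p\le 0$. This duality trick uses only the vector-norm Oseledets statement twice and a one-line inequality, entirely bypassing the rate issue that your Cartan approach runs into.
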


To prove this proposition we use the following lemma that translates Oseledets' Theorem in our setting:

\begin{lem}
\label{oselbis} Assume that $\int d_p(e,g)\, d\mu(g)<+\infty$. Then there exists a measurable map $$\mathbf{b}=\mathbf{b}_p:\Omega\to B_p,$$ such that $\pp$-almost surely
\begin{equation}\label{eq-oselbis}
\lim_{n\to +\infty} \frac{1}{n} \ln ||x_n^{-1} b\Lambda_n||_p=0,
\end{equation}
for any $b$ in the class of $\mathbf{b}(w)$.
\end{lem}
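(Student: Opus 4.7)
\emph{Proof plan.} The strategy is to construct $\mathbf{b}(w)$ as an Oseledets flag for the walk, and then to read \eqref{eq-oselbis} off the exponents that the multiplicative ergodic theorem provides.

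First, I apply the Oseledets multiplicative ergodic theorem to the $GL_d(\Q_p)$-valued i.i.d.\ cocycle driven by $\mu$, using the two-sided extension of the shift so as to get an invertible Oseledets decomposition. Under the moment hypothesis $\int d_p(e,g)\,d\mu(g)<\infty$, the theorem yields, for $\pp$-a.e.\ $w$, a measurable direct-sum decomposition $\Q_p^d = H_1(w)\oplus\cdots\oplus H_r(w)$ with $\dim H_k(w)=j_k-j_{k-1}$ such that
$$\lim_{n\to+\infty}\frac{1}{n}\ln|x_n^{-1}v|_p=-\la_{j_k}(p), \qquad v\in H_k(w)\setminus\{0\}.$$
Summing the first $k$ summands gives an ascending flag $F_k(w):=H_1(w)\oplus\cdots\oplus H_k(w)$ of dimensions $j_1<\cdots<j_r=d$. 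Any vector in $F_k(w)\setminus F_{k-1}(w)$ has a non-zero $H_k$-component, which dominates the $x_n^{-1}$-asymptotics, so the same limit $-\la_{j_k}(p)$ holds throughout $F_k(w)\setminus F_{k-1}(w)$. I define $\mathbf{b}(w)\in B_p$ to be the flag $F_{\bullet}(w)$.

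Second, I reduce \eqref{eq-oselbis} to a column-by-column statement. Two representatives of $\mathbf{b}(w)$ in $GL_d(\Q_p)$ differ by multiplication on the right by an element of $P_p$; since $P_p$ is block upper-triangular with invertible diagonal blocks, it sends $E_k\setminus E_{k-1}$ into itself. Consequently, for any representative $b$, the column $b e_i$ lies in $F_k(w)\setminus F_{k-1}(w)$ whenever $j_{k-1}<i\le j_k$, independently of the choice of $b$, whence $\frac{1}{n}\ln|x_n^{-1}be_i|_p\to-\la_i(p)$. Combined with the fact that $\frac{1}{n}\ln|(\Lambda_n)_{ii}|_p\to\la_i(p)$ (immediate from \eqref{eq-lambda-R}--\eqref{eq-lambda-Qp}), this gives $\frac{1}{n}\ln|x_n^{-1}b\Lambda_n e_i|_p\to 0$ for every $i=1,\dots,d$.

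Finally, I pass from columns to the operator norm. Both in the archimedean and in the non-archimedean case, $\|M\|_p$ is comparable to $\max_i|Me_i|_p$ up to a factor depending only on $d$, and $\|M\|_p\ge|\det M|_p^{1/d}$ via the Cartan decomposition. The upper bound $\limsup\frac{1}{n}\ln\|x_n^{-1}b\Lambda_n\|_p\le 0$ then follows immediately from the column estimate, and the matching lower bound from $|\det(x_n^{-1}b\Lambda_n)|_p=|\det b|_p\cdot e^{o(n)}$, which stays bounded at the exponential scale thanks to $\frac{1}{n}\ln|\det x_n|_p\to\sum_i\la_i(p)$. The main obstacle in this proof is the correct invocation of Oseledets' theorem over $\Q_p$ in the form that controls the decay of $|x_n^{-1}v|_p$ (rather than the growth of $|x_n v|_p$); once the invertible Oseledets decomposition is at our disposal, the rest of the argument is essentially linear algebra and dimension bookkeeping.
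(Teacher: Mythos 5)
Your proof is correct and follows essentially the same path as the paper: extract a measurable flag from Oseledets' theorem (Raghunathan's local-field version) controlling the decay rates of $|x_n^{-1}v|_p$, pick a matrix representative $b$, and estimate $\|x_n^{-1}b\Lambda_n\|_p$ column by column. The only differences are cosmetic: the paper works directly with the one-sided filtration form of Oseledets rather than routing through the two-sided extension and the splitting $\bigoplus H_k$, and for the lower bound it uses $\|M\|_p\ge \max_i|Me_i|_p$ on the same columns instead of your determinant bound via $|\det x_n|_p$.
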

\begin{proof}
 Let us first recall the multiplicative ergodic theorem, first proved by V.I. Oseledets \cite{Ose} for real matrices and generalized by M.S. Raghunathan \cite{Rag} to matrices on local fields. It says that $\pp$-a.s. there exists a filtration of subspaces of $\Q_p^d$,   $\{0\}=V^0(w)\subset V^1(w)\subset \dots \subset V^r(w)=\Q_p^d$, such that
\begin{enumerate}
\item[(i)] The map $w\to V^i(w)$ is measurable for all $i\le r$.
\item[(ii)] For all $1\le i \le r$, we have $v\in V^i(w)\setminus V^{i-1}(w)$ if, and only if,
$$\lim_{n\to +\infty} \frac{1}{n} \ln ||w_n^{-1}\dots w_1^{-1} v||_p=-\la_{j_i}(p),$$
\end{enumerate}
where $r$ and $j_1,\dots,j_r$ are defined as in section \ref{jis}.

Suppose that $p\neq \infty$ (the real case is treated analogously).
Denote by $(e_1,\dots,e_d)$ the canonical basis of $\Q_p^d$. Consider a matrix $b\in GL_d(\Q_p)$ such that for all $i\le r$, $b$ sends the family $(e_1,\ldots,e_{j_i})$ into a basis of $V^i(w)$. Then $b=[v_1|\cdots|v_d]$, where for all
$i\le r$, $(v_1,\ldots,v_{j_i})$ is a basis of $V^i(w)$. Observe that
$$ x_n^{-1} b\Lambda_n=\left [x_n^{-1} v_1 p^{-\left[\frac{n\lambda_1(p)}{\ln p}\right]}\right|\quad\cdots \quad \left| x_n^{-1} v_d p^{-\left[\frac{n\lambda_d(p)}{\ln p}\right]}\right].$$
Then
$$ \max_{k=1,\ldots, d} \left( p^{\left[\frac{n\lambda_{k}(p)}{\ln p}\right]}|x_n^{-1} v_k |_p \right) \leq  \left\|x_n^{-1} b\Lambda_n\right\|_p \leq d\,\max_{k=1,\ldots, d} \left(p^{\left[\frac{n\lambda_{k}(p)}{\ln p}\right]} |x_n^{-1} v_k |_p \right).$$
Then, since $w_n^{-1}\dots w_1^{-1}=x_n^{-1}$, by (ii)
$$\lim_{n\to +\infty} \frac{1}{n} \ln ||x_n^{-1} b\Lambda_n||_p=0.$$
To conclude the proof, just observe that two matrices $b_1$ and $b_2$ give two bases of the same filtration $\{V_i(w)\}_i$ if, and only if, $b_1^{-1}b_2$ is in the group $P_p$, thus such matrix $b$ can be identified with an element of $B_p$.
\end{proof}

\begin{proof}[Proof of Proposition \ref{LLN}]
   Let $\widetilde{x}_n=(w_1^t)^{-1}\cdots (w_n^t)^{-1}=(x_n^t)^{-1}$ be the random  walk of law $\widetilde{\mu}$, image of $\mu$ under the map $g\mapsto (g^t)^{-1}$.
   Then the Lyapunov exponents associated to $\widetilde{\mu}$ are
   $$\widetilde{\lambda}_i=-\lambda_{d-i}.$$
   Let $\widetilde{\Lambda}_n$ be the diagonal matrix constructed with the exponents $\widetilde{\lambda}_i$ as in \eqref{eq-lambda-Qp}. For $\pp$-almost all $w$ there exists a $\widetilde{b}\in GL_d(\Q_p)$ such that:
   $$\lim_{n\to +\infty} \frac{1}{n} \ln ||x_n^t \widetilde{b}\widetilde{\Lambda}_n||_p=0.$$
Consider the matrix $s=\left[
                                 \begin{array}{ccc}
                                   0 & \cdots & 1 \\
                                   \vdots & \cdots & \vdots \\
                                   1 & \cdots & 0 \\
                                 \end{array}
                               \right]$
that transforms the basis $(e_1, \ldots, e_d)$ in the basis $(e_d, \ldots, e_1)$. Then
\begin{eqnarray*}
 ||x_n^t \widetilde{b}\widetilde{\Lambda}_n||_p
 &=& ||x_n^t \widetilde{b}s s^{-1}\widetilde{\Lambda}_n s||_p  \qquad\mbox{since $\|s\|_p=\|s^{-1}\|_p=1$} \\
 &=& ||x_n^t \widetilde{b}s \Lambda_n^{-1} ||_p \qquad\mbox{since }s^{-1}\widetilde{\Lambda}_n s=\Lambda_n^{-1}
 \\&=& || \Lambda_n^{-1} (\widetilde{b}s)^t  x_n||_p\qquad\mbox{since }\|g^t\|_p=\|g\|_p.
\end{eqnarray*}
Set $\overline{b}=((\widetilde{b}s)^t)^{-1}$, then
$$\lim_{n\to +\infty} \frac{1}{n} \ln ||\Lambda_n^{-1} \overline{b}^{-1}  x_n||_p=0.$$

We want to show now that if $b$ is as in \eqref{eq-oselbis} then $b$ and $\overline{b}$ are in the same class in $B_p$.  To do this observe that \begin{equation}\label{eq-P-lambda}
u\in P_p \Longleftrightarrow \lim_{n\to +\infty}\frac{1}{n} \ln ||\Lambda^{-1}_n u \Lambda_n||_p = 0 \Longleftrightarrow \lim_{n\to +\infty}\frac{1}{n} \ln ||\Lambda^{-1}_n u \Lambda_n||_p \leq 0.
\end{equation}
This  can be proved by direct calculations using the fact that   $\max_{i,j}|g_{i,j}|_p\leq ||g||_p\leq d^2\max_{i,j}|g_{i,j}|_p$ for all $p\in \kP$.

Then since
$$\ln \|\Lambda_n^{-1}\overline{b}^{-1}b\Lambda_n\|_p=\ln \|\Lambda_n^{-1}\overline{b}^{-1}x_nx_n^{-1}b\Lambda_n\|_p \leq \ln \|\Lambda_n^{-1}\overline{b}^{-1}x_n\|_p + \ln \|x_n^{-1}b\Lambda_n\|_p,$$
it follows immediately that $\overline{b}^{-1}b\in P_p$.

On the other hand
$$||\Lambda_n^{-1} \overline{b}^{-1} x_n||_p||\Lambda_n^{-1} u\Lambda_n||_p^{-1}\leq
 ||\Lambda_n^{-1} u^{-1}\overline{b}^{-1} x_n||_p \leq
 ||\Lambda_n^{-1} u^{-1}\Lambda_n||_p||\Lambda_n^{-1} \overline{b}^{-1} x_n||_p.$$
Then for every $b_1=\overline{b}u$ with $u$ in the group $P_p$,
$$\lim_{n\to +\infty} \frac{1}{n} \ln ||\Lambda_n^{-1} b_1^{-1}  x_n||_p=0.$$
Thus for all $b\in \mathbf{b}(w)$,
$$\lim_{n\to +\infty} \frac{1}{n} d_p\left(x_n,b\Lambda_n\right)=\lim_{n\to +\infty} \frac{1}{n} (\ln^+ \|x_n^{-1}b\Lambda_n\|_p+\ln^+ ||\Lambda_n^{-1} b^{-1}  x_n||_p)=0.$$

It just remains to see that the class $\mathbf{b}(w)$ is the unique such that \eqref{eq-LLN} holds. But if $b_1$ and $b_2$ are two matrices such that \eqref{eq-LLN} holds, then
$$0=\lim_{n\to +\infty} \frac{1}{n} d_p\left(b_1\Lambda_n,b_2\Lambda_n\right)=\lim_{n\to +\infty} \frac{1}{n} d_p\left(e,\Lambda_n^{-1}b_1^{-1}b_2\Lambda_n\right),$$
and using once more \eqref{eq-P-lambda} we conclude.
\end{proof}

\subsection{The spaces $B_p$ and $\mathbb{B}$ are $\mu$-boundaries}
It is easily checked, using left-invariance of $d_p$,  that the function $\mathbf{b}_p$ defined in Proposition \ref{LLN} satisfies
the hypothesis of Proposition \ref{prop-mu-bound}. Then we immediately get 

\begin{cor}
Let $\mu$ be a probability measure on $GL_d(\Q_p)$. Assume that $\int d_p(e,g)\, d\mu(g)<+\infty$, and let $\nu_p$ be the law of $\mathbf{b}_p$. Then $(B_p,\nu_p)$ is a $\mu$-boundary.

Let $\mu$ be a probability measure on $GL_d(\Q)$. Assume that $\int d(e,g)\, d\mu(g)<+\infty$. Let
$\mathbf{b}$ be the map from $\Omega$ to $\mathbb{B}=\prod_{p\in\kP}B_p$
defined by:
$$\mathbf{b}: w\mapsto \mathbf{b}(w)=(\mathbf{b}_p(w))_{p\in\kP}.$$
Let $\nu$ be the law of $\mathbf{b}$. Then $(\mathbb{B},\nu)$ is a $\mu$-boundary.
\end{cor}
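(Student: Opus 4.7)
The plan is to apply Proposition \ref{prop-mu-bound}: it suffices to verify that $\mathbf{b}_p$ (respectively $\mathbf{b}$) is measurable and satisfies the cocycle identity $w_1\mathbf{b}(\theta w)=\mathbf{b}(w)$ for $\pp$-almost every $w$. For the second statement, the hypothesis $\int d(e,g)\,d\mu(g)<+\infty$ with $d=\sum_{p}d_p$ forces $\int d_p(e,g)\,d\mu(g)<+\infty$ for each individual $p\in\kP$ (all summands are non-negative), so Proposition \ref{LLN} applies at every $p$ and $\mathbf{b}_p$ is well defined. Measurability of each $\mathbf{b}_p$ is provided by the Oseledets construction in Lemma \ref{oselbis}, where the filtration $\{V^i(w)\}$ depends measurably on $w$; measurability of $\mathbf{b}=(\mathbf{b}_p)_p$ into $\B$ with the product Borel $\sigma$-algebra then follows since that $\sigma$-algebra is generated by the coordinate projections onto the $B_p$.

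The substantive step is the cocycle identity for $\mathbf{b}_p$. Let $y_n=w_2\cdots w_{n+1}$ denote the walk shifted by $\theta$, so that $x_{n+1}=w_1 y_n$. Fix any representative $b'\in GL_d(\Q_p)$ of the class $\mathbf{b}_p(\theta w)$. Applying Proposition \ref{LLN} to $\theta w$ yields $\frac{1}{n}d_p(y_n,b'\Lambda_n)\to 0$. By left-invariance of $d_p$ and the triangle inequality,
\begin{equation*}
d_p(x_{n+1},w_1b'\Lambda_{n+1})=d_p(y_n,b'\Lambda_{n+1})\le d_p(y_n,b'\Lambda_n)+d_p(b'\Lambda_n,b'\Lambda_{n+1}).
\end{equation*}
From \eqref{eq-lambda-R} and \eqref{eq-lambda-Qp} one checks that $\Lambda_{n+1}\Lambda_n^{-1}$ has $p$-norm and inverse $p$-norm bounded uniformly in $n$ (for $p=\infty$ it is $\mathrm{diag}(e^{\lambda_i(\infty)})$; for finite $p$ the relevant integer parts differ by at most one), so the second term on the right is $O(1)$. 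Dividing by $n+1$ and letting $n\to\infty$ gives $\frac{1}{n+1}d_p(x_{n+1},w_1b'\Lambda_{n+1})\to 0$, and the uniqueness clause of Proposition \ref{LLN} forces the class of $w_1b'$ to coincide with $\mathbf{b}_p(w)$, i.e.\ $w_1\mathbf{b}_p(\theta w)=\mathbf{b}_p(w)$.

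The cocycle identity for $\mathbf{b}$ on $\B$ is then automatic, since the diagonal $GL_d(\Q)$-action on $\B$ is defined coordinatewise through the embeddings $GL_d(\Q)\hookrightarrow GL_d(\Q_p)$, so the component identities combine into $w_1\mathbf{b}(\theta w)=\mathbf{b}(w)$. Both conclusions then follow by invoking Proposition \ref{prop-mu-bound} for $B_p$ and for $\B$ respectively, with $\nu_p$ and $\nu$ identified as the laws of $\mathbf{b}_p$ and $\mathbf{b}$ under $\pp$.

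The only mildly delicate point where I would slow down is the uniform boundedness of the $p$-norm of $\Lambda_{n+1}\Lambda_n^{-1}$, which is needed to pass from the LLN at time $n$ (phrased with $\Lambda_n$) to the LLN at time $n+1$ (which requires $\Lambda_{n+1}$); beyond that the argument is routine bookkeeping that the LLN of Proposition \ref{LLN} is compatible with the shift $\theta$.
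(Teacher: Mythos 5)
Your proof is correct and fills in exactly the detail the paper leaves to the reader: the paper simply states that "it is easily checked, using left-invariance of $d_p$, that $\mathbf{b}_p$ satisfies the hypothesis of Proposition \ref{prop-mu-bound}," and your verification of the cocycle identity via $d_p(x_{n+1},w_1b'\Lambda_{n+1})=d_p(y_n,b'\Lambda_{n+1})\le d_p(y_n,b'\Lambda_n)+d_p(\Lambda_n,\Lambda_{n+1})$ together with the $O(1)$ bound on $d_p(\Lambda_n,\Lambda_{n+1})$ is precisely the intended argument. Same approach, fully written out.
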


\section{Poisson boundary of $GL_d(\Q)$}
To prove that $\B$ is the maximal $\mu$-boundary, we use the following lemma, which is a generalization of the ray criterion of V. Kaimanovich \cite{Kai00}, already implicitly used in our previous works \cite{Bro,Sch}.

\begin{lem}\label{lem-ray}
Let $\mu$ be a probability measure on a countable group $G$ with finite entropy. Let $(B,\nu)$ be a $\mu$-boundary and $\mathbf{b}$ the associated boundary map. Suppose that for each $n$ there exits a measurable map $C_n$ from $B$ to subsets of $G$ such that:
$$\lim_{n\to+\infty}\pp( x_n\in C_n(\mathbf{b}(w)))=1 \mbox{ and } \lim_{n\to+\infty}\frac{1}{n}\ln |C_n(z)|\le \delta \quad \mbox{$\nu(dz)$-almost surely}. $$
Then $h^z\leq \delta$ for $\nu$-almost all $z$.
\end{lem}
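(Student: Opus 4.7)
The plan is to follow Kaimanovich's ray-criterion pigeonhole argument. The intuition is that if, with high unconditional probability, $x_n$ lies inside a measurable ``ray'' $C_n(\mathbf{b}(w))$ of subexponential cardinality $e^{n(\delta+o(1))}$, then under the conditional law $\pp^z$ most of the mass of $\pp^z_n$ is concentrated on a set of size $e^{n(\delta+o(1))}$, forcing $-\frac{1}{n}\ln \pp^z_n(x_n)$ to be at most $\delta+o(1)$ with high probability, and hence $h^z\le\delta$.

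First I would disintegrate against $\mathbf{b}$. Since $\pp^z$ is the regular conditional law of $\pp$ given $\mathbf{b}(w)=z$ and $\nu=\mathbf{b}_*\pp$,
$$\int_B \pp^z_n(C_n(z))\, d\nu(z)=\pp(x_n\in C_n(\mathbf{b}(w)))\longrightarrow 1.$$
Since the integrands are bounded by $1$, the convergence is in $L^1(\nu)$; extracting a subsequence $(n_k)$ along which it holds $\nu$-almost surely, I may fix a boundary point $z$ such that both $\pp^{z}_{n_k}(C_{n_k}(z))\to 1$ and $\limsup_{n}\frac{1}{n}\ln|C_n(z)|\le\delta$, and it suffices to prove $h^z\le\delta$ for such $z$.

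Next comes the pigeonhole step. Fix $\epsilon>0$, and, for $n$ large enough that $|C_n(z)|\le e^{n(\delta+\epsilon)}$, split $C_n(z)$ into ``heavy'' and ``light'' parts via
$$D_n(z):=\{g\in C_n(z):\pp^z_n(g)<e^{-n(\delta+2\epsilon)}\}.$$
A crude union bound gives $\pp^z_n(D_n(z))\le |C_n(z)|\cdot e^{-n(\delta+2\epsilon)}\le e^{-n\epsilon}$, so
$$\pp^z\bigl(x_{n_k}\in C_{n_k}(z)\setminus D_{n_k}(z)\bigr)\ge \pp^z_{n_k}(C_{n_k}(z))-e^{-n_k\epsilon}\longrightarrow 1.$$
On this event $-\frac{1}{n_k}\ln\pp^z_{n_k}(x_{n_k})\le\delta+2\epsilon$, so the variables $Y_k:=-\frac{1}{n_k}\ln\pp^z_{n_k}(x_{n_k})$ satisfy $\pp^z(Y_k\le\delta+2\epsilon)\to 1$.

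Finally I would translate this probabilistic bound into the almost-sure bound defining $h^z$. By definition $-\frac{1}{n}\ln \pp^z_n(x_n)$ converges $\pp^z$-almost surely to the deterministic constant $h^z$; so does the subsequence $Y_k$, hence $Y_k\to h^z$ in $\pp^z$-probability. Combined with $\pp^z(Y_k\le\delta+2\epsilon)\to 1$, this forces $h^z\le\delta+2\epsilon$, and letting $\epsilon\downarrow 0$ completes the argument. The main (and essentially only) subtle point I foresee is this last transition from in-probability concentration along a subsequence to a pointwise upper bound on an almost-sure limit, but it is immediate from the coexistence of the two modes of convergence.
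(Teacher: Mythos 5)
Your proof is correct and follows essentially the same route as the paper's: disintegrate against $\mathbf{b}$ to pass to conditional laws, extract a subsequence on which $\pp_n^z(C_n(z))\to 1$ almost surely, then use the subexponential cardinality of $C_n(z)$ in a pigeonhole step and combine with the $\pp^z$-a.s. convergence of $-\frac{1}{n}\ln\pp_n^z(x_n)$ to $h^z$ to conclude $h^z\le\delta$. The only cosmetic difference is that you split $C_n(z)$ into heavy and light parts via $D_n(z)$, whereas the paper intersects $C_n(z)$ with the entropy-typical set $A_n(z)=\{g: h^z-\varepsilon<-\tfrac{1}{n}\ln\pp_n^z(g)<h^z+\varepsilon\}$; both are the same counting argument.
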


\begin{proof}
Observe that
$$
\pp\left(x_{n}\in C_n(\textrm{\textbf{b}}(w))\right)=\int_{B}\pp_{n}^{z}\left[C_n(z)\right]\nu(dz)\to 1.$$
Thus, along a sub-sequence, $\pp_{n}^{z}\left[C_n(z)\right]$
converges to 1 for $\nu$-almost all $z$.

Recall that $h^z$ is the $\pp^{z}$-almost sure limit of $-\ln\pp_{n}^{z}(x_{n})/n$.
Now for any $\varepsilon>0$ consider the set
$$A_{n}(z)=\left\{ g\in G \mid -h^z-\varepsilon<\ln\pp_{n}^{z}(g)/n<-h^z+\varepsilon\right\} .$$
Then $\pp_{n}^{z}(A_{n}(z)\cap C_n(z))$
converges to 1 on a sub-sequence, while, for large $n$
$$\pp_{n}^{z}(A_{n}(z)\cap C_n(z))\leq e^{n\left(\varepsilon-h^z\right)}|C_n(z)|\leq e^{n\left(\varepsilon-h^z\right)}e^{n\left(\delta+\varepsilon\right)}.$$
Thus $\delta-h^z+2\varepsilon\geq 0$. Since
$\varepsilon$ was arbitrarily chosen, we get $h^z\leq \delta.$
\end{proof}

In order to apply this lemma in our setting we need to show that the gauge on $GL_d(\Q)$ associated to the distance $d$ grows at most exponentially:

\begin{lem}\label{lem-card}
For $g\in GL_d(\Q)$  and $R\geq 0$, let $$B(g,R)=\{h\in GL_d(\Q)\mid d(g,h)\leq
  R\}.$$
Then there exits a constant $C>0$ such that for all $g$ and $R$
$$|B(g,R)|\leq Ce^{CR}.$$
\end{lem}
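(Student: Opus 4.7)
The plan is to reduce first to the case $g=e$ by using the left-invariance of the pseudometric $d$: since $B(g,R)=g\cdot B(e,R)$, it suffices to show $|B(e,R)|\le Ce^{CR}$. So I want to count the matrices $h=(h_{ij})\in GL_d(\Q)$ satisfying
$$\sum_{p\in\kP}\bigl(\ln^+\|h\|_p+\ln^+\|h^{-1}\|_p\bigr)\le R,$$
which in particular gives $\sum_{p\in\kP}\ln^+\|h\|_p\le R$. The natural strategy is to control each rational entry $h_{ij}$ individually and then multiply the counts.

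The key observation is that for every $p\in\kP$ one has $|h_{ij}|_p\le\|h\|_p$. For $p\in\kP^*$ this follows from the non-archimedean description $\|h\|_p=\max_{k,l}|h_{kl}|_p$ (already used in the proof of Proposition \ref{LLN}), and for $p=\infty$ it is the standard bound $|h_{ij}|_\infty=|\langle e_i,he_j\rangle|\le\|h\|_\infty$. Summing $\ln^+$ over $p$ yields
$$\sum_{p\in\kP}\ln^+|h_{ij}|_p\;\le\;\sum_{p\in\kP}\ln^+\|h\|_p\;\le\;R,$$
for every pair $(i,j)$.

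Now I would invoke the product formula for $\Q$: for any nonzero rational $q=a/b$ written in lowest form with $b>0$, one has $\prod_{p\in\kP}|q|_p=1$ and
$$\prod_{p\in\kP}\max(1,|q|_p)=\max(|a|,b).$$
Therefore
$$\max(|a_{ij}|,b_{ij})\;=\;\exp\Bigl(\sum_{p\in\kP}\ln^+|h_{ij}|_p\Bigr)\;\le\;e^R,$$
so every entry of $h$ is a rational of classical (naive) height at most $e^R$. The number of such rationals is at most $(2e^R+1)(e^R+1)\le 4\,e^{2R}$, and consequently
$$|B(e,R)|\;\le\;(4\,e^{2R})^{d^2}\;\le\;C\,e^{CR},$$
for a suitable constant $C=C(d)$, which gives the lemma. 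No step is really an obstacle here; the only point requiring care is to invoke the product formula to convert local norm data into an integrality-type bound, which is what converts the exponential growth of the pseudometric ball into only an exponential cardinality bound.
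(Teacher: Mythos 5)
Your proof is correct and follows essentially the same strategy as the paper: reduce to $g=e$ by left-invariance, pass from $d(e,h)\le R$ to the entrywise bound $\sum_{p\in\kP}\ln^+|h_{ij}|_p\le R$, and count rationals satisfying that constraint. The only difference is that the paper outsources the rational-counting estimate to a citation (\cite{Bro}), whereas you prove it directly via the product formula and naive height --- a clean, self-contained substitute for the cited fact.
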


\begin{proof}
First observe that since  $d(e,g^{-1}h)=d(g,h)$, we have $g^{-1}B(g,R)=B(e,R)$. Thus two balls with the same radius have the same cardinality, and we
can restrict us without loss of generality to the case $g=e$.

Observe now that if $h=(h_{i,j})\in B(e,R)$ then for all couples of indices $(i,j)$
$$\sum_{p\in\kP}\ln^+ |h_{i,j}|_p\leq\sum_{p\in\kP}\max_{i,j}\ln^+|h_{i,j}|_p\leq d(e,h)\leq R.$$
It can be shown (see for instance \cite{Bro}) that there exists $C'$ such that for all $R$
$$\left|\left\{q\in\Q \mid \sum_{p\in\kP}\ln^+ |q|_p<R \right\}\right|\leq C'e^{C'R}.$$
The desired result follows.
\end{proof}

\begin{proof}[Proof of Theorem \ref{theo1}]
First observe that, since $\mu$ has finite first moment with respect of
to an exponentially growing gauge, it has finite entropy.

For any $p$, consider the moment of the random walk with respect  to $d_p$:
$$m_p=\int  d_p(e,g) \ d\mu(g).$$
Observe that $\sum_{p\in \kP} m_p=\E(d(e,w_1))<+\infty$.
Fix $F$ a finite subset of $\kP$ and  set  $m_{F^c}=\sum_{p\in F^c} m_p$. By the law of large numbers, $\pp$-almost surely
$$\frac{\sum_{p\in F^c}d_p(x_n,e)}{n}
\leq \frac{\sum_{k=1}^n \sum_{p\in F^c} d_p(x_k,x_{k-1})}{n}
= \frac{\sum_{k=1}^n \sum_{p\in F^c} d_p(w_k,e)}{n}
\to m_{F^c}.$$

Fix $\varepsilon>0$ and
$\mathbf{b}=(b_p)_{p\in \kP}\in \B$, and set
$$C^{F,\varepsilon}_{n}(\mathbf{b})=
\left\{g\in GL_d(\Q) \mid d_p(g,b_p\Lambda_n(p))\le n\,\varepsilon \ \forall p\in F,\ \sum_{p\in F^c}d_p(g,e)\leq n \, (m_{F^c} +\varepsilon)  \right\}.$$

Then by Proposition \ref{LLN}
$$\pp\left[x_n\in C^{F,\varepsilon}_n(\mathbf{b}(w))\right]\to 1.$$

To apply Lemma \ref{lem-ray}, we need to control the cardinality of $C^{F,\varepsilon}_n$. Suppose that $C^{F,\varepsilon}_{n}(\mathbf{b})$ is nonempty and let $g_0\in C^{F,\varepsilon}_{n}(\mathbf{b})$. Then for all $g\in C^{F,\varepsilon}_{n}(\mathbf{b})$,
\begin{eqnarray*}
  d(g_0,g)&=& \sum_{p\in \kP} d_p(g_0,g)\\&\leq& \sum_{p\in F} \left(d_p(g_0,b_p\Lambda_n)+d_p(b_p\Lambda_n,g)\right)+ \sum_{p\in \kP-F}\left( d_p(g_0,e)+d_p(e,g)\right)\\
  &\leq& 2n\,(|F|\varepsilon + m_{F^c}+\varepsilon)
\end{eqnarray*}
Thus
$$\frac{1}{n}\ln|C^{F,\varepsilon}_{n}(\mathbf{b})|\leq \frac{1}{n}\ln |B(g_0, 2n\,(|F|\varepsilon + m_{F^c}+\varepsilon))|\leq 2nC\,(|F|\varepsilon + m_{F^c}+\varepsilon)+\frac{\ln C}{n}.$$
Thus for all finite $F$ and all $\varepsilon>0$,
$$h^z\leq 2C(|F|\varepsilon + m_{F^c} +\varepsilon).$$
Letting $\varepsilon$ go to zero and $F$ grow to $\kP$ (in such a way $m_{F^c}$ goes to zero), it follows that $h^z=0$ and
thus that $(\B, \nu)$ is the Poisson boundary.
\end{proof}

To conclude  we prove our triviality criterion:
\begin{proof}[Proof of Corollary \ref{cor-triv}]
It is immediate that if $\lambda_1(p)=\lambda_d(p)$ then $P_p=GL_d(\Q_p)$, thus $B_p$ is trivial.

Suppose now that the projection of $\nu$ on $B_p$  is trivial , but that $\lambda_1(p) > \lambda_d(p)$. In this case $B_p$ is nontrivial and the projection of $\nu$ on $B_p$ is a dirac measure whose mass is concentrated in a point $b\in B_p$ that is fixed by the support of $\mu$. Then the support of $\mu$ fixes all sub-spaces that compose the nontrivial flag associated to $b$. This contradicts the fact that no proper subspace of $\Q_p^d$ is fixed by the support of $\mu$.
\end{proof}


\begin{thebibliography}{99}

\bibitem{Az} \textbf{Azencott R.:} \textit{Espaces de Poisson des groupes localement compacts}, (French) Lecture Notes in Mathematics, Vol. 148. Springer-Verlag, Berlin-New York, (1970), ix+141 pp.

\bibitem{BS}  \textbf{Bader U., Shalom Y.:} \textit{Factor and normal subgroup theorems for lattices in products of groups},
Invent. Math. 163, (2006), 415--454.

\bibitem{BL} \textbf{Bougerol P., Lacroix J.:} \textit{Products of random matrices with applications to Schr\"odinger operators},
Progress in Probability and Statistics, 8. Birkh\"auser Boston, Inc., Boston, MA, (1985), xii+283 pp.


\bibitem{Bro} \textbf{Brofferio S.:} \textit{The Poisson Boundary of
random rational affinities},  Ann. Inst. Fourier 56, (2006),
499--515.

\bibitem{CKW} \textbf{Cartwright D. I., Kaimanovich V. A., Woess
W.:} \textit{Random walks on the affine group of local fields and
of homogeneous trees}, Ann. Inst. Fourier 44 (1994), 1243--1288.

\bibitem{Der} \textbf{Derriennic Y.:} \textit{Entropie, th\'eor\`emes
limite et marches al\'eatoires}, in Probability measures on groups
VIII (Oberwolfach, 1985), LNM 1210, pp. 241--284, Springer,
Berlin, (1986).


\bibitem{Fur} \textbf{Furman A.:} \textit{Random walks on groups
and random transformations}, Handbook of dynamical systems, vol.
1A, pp. 931--1014, Amsterdam: North-Holland (2002).

\bibitem{F} \textbf{Furstenberg H.:} \textit{A Poisson formula for
semi-simple Lie groups}, Ann. of Math. 77 (1963), 335--386.

\bibitem{F2} \textbf{Furstenberg H.:} \textit{Boundary theory and
stochastic processes on homogeneous spaces}, in Harmonic analysis
on homogeneous spaces (Proc. Sympos. Pure Math., Vol. XXVI,
Williams Coll., Williamstown, Mass., 1972), p. 193--229, Amer.
Math. Soc., Providence, R.I., (1973).

\bibitem{GR} \textbf{Guivarc'h Y., Raugi A.:} \textit{Fronti\`ere de Furstenberg, propr\'et\'es de contraction et th\'or\`emes de convergence},
(French) Z. Wahrsch. Verw. Gebiete 69, (1985), 187--242.

\bibitem{Gui80} \textbf{Guivarc'h Y.:} \textit{Th\'eor\`emes quotients pour les marches al\'eatoires}, Conference on Random Walks (Kleebach, 1979) (French), Soc. Math. France, (1980), 15--28.


\bibitem{Kai85} \textbf{Kaimanovich V. A.:} \textit{An entropy condition for maximality of the boundary of random walks on discrete groups}, Soviet Math. Dokl.  31, (1985), 193--197.

\bibitem{Kai87} \textbf{Kaimanovich V. A.:} \textit{Lyapunov exponents, symmetric spaces and a multiplicative ergodic theorem for semisimple {L}ie groups}, Zap. Nauchn. Sem. Leningrad. Otdel. Mat. Inst. Steklov.
              (LOMI)   164  (1987),  Differentsialnaya Geom. Gruppy Li i Mekh. IX, 29--46, 196--197;  translation in
J. Soviet Math. 47 (1989), no. 2, 2387--2398.


\bibitem{Kai91}  \textbf{Kaimanovich V. A.:} \textit{Poisson boundaries of random walks on discrete solvable groups}, Probability measures on groups, X (Oberwolfach, 1990), 205--238, Plenum, New York, (1991).

\bibitem{Kai00} \textbf{Kaimanovich V. A.:} \textit{The Poisson formula for groups with hyperbolic properties}, Ann. of Math. (2) 152, (2000), 659--692.

\bibitem{KV} \textbf{Kaimanovich V. A., Vershik  A. M.:} \textit{Random walks on discrete groups: boundary and entropy},
Ann. Probab. 11, (1983), 457--490.

\bibitem{KL} \textbf{Karlsson A.,  Ledrappier F.:} \emph{On laws of large numbers for random walks},
 Ann. Probab. 34, (2006), 1693--1706.

\bibitem{KM} \textbf{Karlsson A., Margulis G. A.:} \textit{A multiplicative ergodic theorem and nonpositively curved spaces}, Comm. Math. Phys. 208,  (1999), 107--123. 

\bibitem{Led1} \textbf{Ledrappier F.:} \textit{Quelques propri\'et\'es des exposants caract\'eristiques},  \'Ecole d'\'et\'e de probabilit\'es de Saint-Flour, XII---1982,  305--396, Lecture Notes in Math. 1097, Springer, Berlin, (1984).

\bibitem{Led2} \textbf{Ledrappier F.:} \textit{Poisson boundaries of discrete groups of matrices},
Israel J. Math. 50, (1985), 319--336.


\bibitem{Ose} \textbf{ Oseledets V.I.:} \textit{A multiplicative ergodic theorem. Lyapunov characteristic numbers for dynamical systems}, Trans.Moscow Math. Soc. 19(1968), 197-231. Moscov. Mat. Obsch. 19(1968),179--210.

\bibitem{Rag} \textbf{Raghunathan M. S.:} \textit{A proof of Oseledec's multiplicative ergodic theorem}, Israel J. Math. 32, (1979), 356--362.

\bibitem{Rau} \textbf{Raugi A.:} \textit{Fonctions harmoniques sur les groupes localement compacts \`a base d\'enombrable},
Bull. Soc. Math. France M\'em. No. 54, (1977), 5--118.


\bibitem{Sch} \textbf{Schapira Br:} \textit{The Poisson boundary of triangular matrices in a number field}, to appear in Ann. Inst. Fourier 59, (2009).



\end{thebibliography}
\end{document}